\newtheorem{theorem}{Theorem}[section]     
\newtheorem{lemma}[theorem]{Lemma} 
\newtheorem{corollary}[theorem]{Corollary} 
\newtheorem{conjecture}[theorem]{Conjecture} 
\theoremstyle{definition}
\theoremstyle{remark}
\newtheorem{remark}[theorem]{Remark}
\newcommand{\oM}{\overline{\mathcal{M}}}
\newcommand{\gl}{\mathrm{gl}}
\newcommand{\Coef}{\mathrm{Coef}}
\def\res{\mathrm{res}}
\def\DR{\mathrm{DR}}
\def\DZ{\mathrm{DZ}}
\def\sfB{{\mathsf{B}}}
\begin{document}
\title{A conjectural formula for $\lambda_g \DR_g(a,-a)$ is true in Gorenstein quotient}

\author[D.~Gubarevich]{D. Gubarevich}

\address[D. Gubarevich]{Faculty of Mathematics, National Research University Higher School of Economics,
Usacheva 6, 119048, Moscow, Russia}

\email{danilphys180916@mail.ru}

\begin{abstract}
In this note we prove that a conjectural formula for the class $\lambda_g \DR_g(a,-a)\in R^{2g}(\oM_{g,2})$ proposed recently in a work of ~\cite{BIS21} is true in the Gorenstein quotient of the ring $R^*(\oM_{g,2})$. As a corollary we prove the strong $\DR \backslash \DZ$ equivalence conjecture for one-point correlators.
\end{abstract}

\maketitle

\tableofcontents 

\section{Introduction}

The class $\lambda_g \DR_g(a,-a)\in R^{2g}(\oM_{g,2})$ has a very transparent geometric nature. It is the intersection of two tautological classes. The first one is the top Chern class 
$\lambda_g=c_g(\mathbb{E})\in R^{g}(\oM_{g,2})$ of the Hodge bundle $\mathbb{E}$ of holomorphic differential forms over moduli space $\oM_{g,2}$ of genus $g$ stable curves with $2$ marked points. The second one is the double ramification cycle $\DR_g(a,-a)\in R^{g}(\oM_{g,2})$, a class with nice history and many interesting properties. For good exposition of  the double ramification cycle see for example ~\cite{JPPZ17}. In this note we prove relatively simple expression for $\lambda_g \DR_g(a,-a)$, $a \in \mathbb{Z}$ in Gorenstein quotient of $R^{*}(\oM_{g,2})$ using its properties under intersection with classes from $R^{1}(\oM_{g,2})$. The similar properties
were proved to hold for the class $\sfB^g$ introduced in ~\cite{BIS21}. They allowed us to find the same recursive way to compute both intersection numbers 
\begin{align}
\Coef_{a^{2g}}\int_{\oM_{g,2}}\DR_{g}(a,-a)\lambda_g  \omega, \quad \int_{\oM_{g,2}}\sfB^g \omega
\end{align}
for any tautological class $\omega$.

The interest in this intersection product class  $\lambda_g \DR_g(a,-a)$  comes from the study of integrable systems. 
Namely, there is a conjecture on coincidence of potentials of two integrable hierarchies (strong $\DR \backslash \DZ$ equivalence), stated in ~\cite{BGR19}. And the proof of this conjecture for one-point correlators follows from our main result as a simple corollary.

\section{The main results}
\subsection{Preliminaries}

Let $\oM_{g,n}$ be the Deligne-Mumford compactified moduli space of stable genus $g$ with $n$ markings. Denote by $R^*(\oM_{g,n})$ its tautological ring, that is defined as the minimal subring of the full cohomology ring containing the unit and that is closed under all pullbacks and pushforwards by gluing and forgetful morphisms. The first Chern classes of line bundles $\mathbb{L}_i$ over $\oM_{g,n}$ formed by the
cotangent lines at the i-th marked point is denoted  by $\psi_i = c_1(\mathbb{L}_i)\in R^1(\oM_{g,n})$. Define $\kappa$-classes by $\kappa_i=\pi_*(\psi_{n+1}^{i+1})\in R^i(\oM_{g,n})$, where $\pi: \oM_{g,n+1} \to \oM_{g,n}$ is the map forgetting the last marking. We will use the notation, explained in  ~\cite{BGR19},
\begin{align*}
\vcenter{\xymatrix@C=10pt{
		&*+[o][F-]{g_1}\ar@{-}[l]*{{\ }_1\ }\ar@{-}[rr]^<<<<{\omega_1} &&*+[o][F-]{g_2}\ar@{-}[rr]^<<<<{\omega_2}&& *+[o][F-]{g_3}\ar@{--}[rrr]^<<<<{\omega_3} && & *+[o][F-]{g_k}\ar@{-}[r]*{{\ }_2}^<<<<{\omega_k} &
}}
\end{align*}
for the boundary class of curves whose dual graphs are such trees with $k-1$ edges and each vertex in general carries a decoration  $\omega_i=\psi_1^{d_1}\psi_2^{d_2}\prod_i\kappa_i^{a_i}\in R^{d_1+d_2+\sum_i ia_i}(\oM_{g_i,2})$. Note here that the double ramification cycle $\DR_g(a_1,\cdots,a_n)\in H^{2g}(\oM_{g,n},\mathbb{Q})$ was proven to be an element of the tautological ring and its intersection with $\lambda_g$ is polynomial in variables $a_1, \dots, a_n$.  The reason of this polynomiality follows from the Hain's formula.  Let $\mathcal{M}^{ct}_{g,n}\subset \oM_{g,n}$ be moduli space of curves with compact Jacobian. Then Hain's formula express the restriction of double ramification cycle to this subspace $\DR_g(a_1,\cdots,a_n)|_{\mathcal{M}^{ct}_{g,n}}$  as a degree $g$ homogenious polynomial in $a_i$'s. But since $\lambda_g|_{\oM_{g,2}\backslash \mathcal{M}^{ct}_{g,2}} = 0$ \footnote{See proof of Lemma (\ref{lem})} then clearly the intersection $\DR_g(a_1,\cdots,a_n)\lambda_g\in R^{2g}(\oM_{g,n})$ is a polynomial class of space $\oM_{g,n}$. The class $\sfB^g$ is defined as 
\begin{align}
\label{bumboo}
\sfB^g = \sum_{k=1}^g (-1)^{k-1} \sum_{\substack{d_1,\dots,d_k \\ g_1,\dots,g_k}} 
\vcenter{\xymatrix@C=10pt{
		&*+[o][F-]{g_1}\ar@{-}[l]*{{\ }_1\ }\ar@{-}[rr]^<<<<{\psi^{d_1}} &&*+[o][F-]{g_2}\ar@{-}[rr]^<<<<{\psi^{d_2}}&& *+[o][F-]{g_3}\ar@{--}[rrr]^<<<<{\psi^{d_3}} && & *+[o][F-]{g_k}\ar@{-}[r]*{{\ }_2}^<<<<{\psi^{d_k}} &
}},
\end{align}
where the sum is taken over all $g_1+\cdots+g_k=g$, $g_1,\dots,g_k\geq 1$, and $d_1+\cdots+d_k+k-1 = 2g$, $d_1,\dots,d_k\geq 0$, with the extra condition that for any $1\leq \ell\leq k-1$ we have $d_1+\cdots+d_\ell +\ell-1\leq 2(g_1+\cdots +g_\ell) -1$.

\subsection{Conjectural formula}

In ~\cite{BIS21} the authors formulated the following
\begin{conjecture}
 $\Coef_{a^{2g}} \DR_{{g}}(a,-a)\lambda_g = \sfB^g$. 
\end{conjecture}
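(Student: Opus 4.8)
The plan is to exploit the defining property of the Gorenstein quotient: the intersection pairing
$R^{2g}(\oM_{g,2})\times R^{g-1}(\oM_{g,2})\to R^{3g-1}(\oM_{g,2})\cong\mathbb{Q}$
is perfect, since $\dim_{\mathbb{C}}\oM_{g,2}=3g-1$ and $2g+(g-1)=3g-1$. Hence, to prove $\Coef_{a^{2g}}\DR_g(a,-a)\lambda_g=\sfB^g$ in the Gorenstein quotient it suffices to show that the two classes pair identically against every $\omega\in R^{g-1}(\oM_{g,2})$. I would reduce further by letting $\omega$ range over a spanning set of $R^{g-1}(\oM_{g,2})$, namely the decorated boundary strata classes (the trivial graph with a $\psi$--$\kappa$ monomial decoration covering the interior part). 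The goal then becomes: the two evaluation functionals $\omega\mapsto\int\Coef_{a^{2g}}\DR_g(a,-a)\lambda_g\,\omega$ and $\omega\mapsto\int\sfB^g\,\omega$ are computed by one and the same recursion, hence coincide.

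The engine of the recursion is the interaction of each class with a divisor $\alpha\in R^1(\oM_{g,2})$ and, more generally, restriction to boundary strata. On the geometric side I would use three inputs. First, $\lambda_g$ vanishes on the non-compact-type locus: restricted to $\delta_{\mathrm{irr}}$ the Hodge bundle acquires a trivial summand, so $\lambda_g|_{\delta_{\mathrm{irr}}}=0$; this is exactly the vanishing $\lambda_g|_{\oM_{g,2}\setminus\mathcal{M}^{ct}_{g,2}}=0$ quoted in the excerpt, and it localises the entire computation to tree (chain) strata. Second, at a separating node of type $g'+g''$ one has $\lambda_g|_{\delta}=\lambda_{g'}\boxtimes\lambda_{g''}$ (the top Chern class of a direct sum whose ranks add to $g$), while $\DR_g(a,-a)$ restricts, by the double ramification splitting formula, to a sum of products of lower double ramification cycles. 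Third, on compact type Hain's formula makes $\DR_g(a,-a)\lambda_g$ an explicit degree-$g$ polynomial in $a$, fixing the base cases and the $\psi$-bumping rule when $\alpha=\psi_1$ or $\psi_2$. Together these express the pairing of $\Coef_{a^{2g}}\DR_g(a,-a)\lambda_g$ against a decorated chain stratum as a product, over the vertices of the chain, of genus-$g_i$ factors of the same shape, i.e.\ a recursion on $g$ and on the number of chain vertices.

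On the combinatorial side I would read off the matching recursion directly from the definition \eqref{bumboo} of $\sfB^g$: intersecting with a separating divisor, or restricting to a chain stratum, either splits the chain at an edge or peels off an end vertex, reproducing the same product structure over genus-$g_i$ pieces, and one checks that the summation ranges for $(g_i,d_i)$ and the base case $g=1$ agree with the geometric recursion. The main obstacle I anticipate is the exact matching of the two recursions, and in particular the appearance of the inequality $d_1+\cdots+d_\ell+\ell-1\le 2(g_1+\cdots+g_\ell)-1$ in the definition of $\sfB^g$. This constraint is the combinatorial shadow of a support/vanishing statement for $\lambda_g\DR_g$ on the left sub-chain formed by the first $\ell$ vertices: the class carried there has codimension $d_1+\cdots+d_\ell+\ell-1$ and is evaluated against the genus-$(g_1+\cdots+g_\ell)$ Hodge-theoretic factor, which forces the bound. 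Establishing this equivalence term by term — keeping careful track of the automorphisms of the dual graphs, of the signs $(-1)^{k-1}$, and of the multiplicities produced by the $\DR$ splitting — is where the real work lies; once the recursions and base cases are shown to be identical, perfectness of the Gorenstein pairing yields the stated equality, and the strong $\DR\backslash\DZ$ corollary for one-point correlators follows.
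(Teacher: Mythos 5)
Your overall skeleton coincides with the paper's: test both classes against tautological $\omega\in R^{g-1}(\oM_{g,2})$, use the vanishing of $\lambda_g$ off compact type to kill strata whose dual graph contains a cycle, split $\DR_g(a,-a)\lambda_g$ along chain strata by the $\DR$ splitting formula, and anchor everything at the genus-$1$ value $1/24$ via Hain's formula. But there are genuine gaps. First, $\lambda_g|_{\oM_{g,2}\setminus\mathcal{M}^{ct}_{g,2}}=0$ only eliminates graphs containing a cycle; it does not ``localise the entire computation to tree (chain) strata,'' as you assert. Trees with an unmarked leaf vertex survive this vanishing and are not chains; the paper disposes of them separately, via the splitting formula producing a factor $\DR_{g_2}(\emptyset)\lambda_{g_2}=(-1)^{g_2}\lambda_{g_2}^2=0$ (property \eqref{eq:div} of Lemma~\ref{lem}), matched on the other side by the vanishing \eqref{eq:Thm-20div} for $\sfB^g$. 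Your proposal silently identifies trees with chains and never records this step.

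Second, and more seriously, the recursion you describe does not close. After splitting along a decorated chain, each vertex contributes $\int_{\oM_{g_i,2}}\DR_{g_i}(a,-a)\lambda_{g_i}$ (resp.\ $\sfB^{g_i}$) against an arbitrary $\psi$--$\kappa$ monomial; for the trivial graph ($k=1$, $g_1=g$) this is the original problem, and neither Hain's formula nor the ``$\psi$-bumping rule'' you invoke evaluates it. The paper's indispensable extra input, absent from your outline, is \cite[Theorem 1.1]{BSZ16}, $R^{g-1}(\mathcal{M}_{g,2})=\mathbb{Q}\langle\psi_1^{g-1},\psi_2^{g-1}\rangle$: every monomial equals $\alpha\psi_1^{g-1}+\beta\psi_2^{g-1}$ plus boundary classes as in \eqref{arg}, the boundary pieces are chains all of whose vertices have genus $<g$, and since the \emph{same} rewriting with the same $\alpha,\beta$ is fed to both functionals, the induction on genus becomes well-founded. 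Finally, your plan to match the geometric recursion ``term by term'' against the definition \eqref{bumboo}, including the constraint $d_1+\cdots+d_\ell+\ell-1\le 2(g_1+\cdots+g_\ell)-1$, is precisely the hard combinatorics you admit leaving undone --- and it is avoidable: the paper never unwinds \eqref{bumboo} at all, but instead quotes from \cite{BIS21} the three intersection properties \eqref{eq:Thm-irrdiv}--\eqref{eq:Thm-Inter-1-g1-g2-2} of $\sfB^g$, which are formally identical to those of $\Coef_{a^{2g}}\DR_g(a,-a)\lambda_g$ in Lemma~\ref{lem}, so the two sides satisfy one and the same recursion by construction rather than by term-by-term verification.
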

To state the equality of two cohomological classes is an involved problem in general. We prove a less strong result. Namely, that integrals of these classes with any tautological classes of comlementary degrees are equal. In this situation we call such classes equal in Gorenstein quotient. Now we state the 

\begin{theorem}
\label{th}
The conjecture 
\begin{align*}
 \Coef_{a^{2g}}\DR_{g}(a,-a)\lambda_g = \sfB^g
\end{align*}
is true in Gorenstein quotient of  $R^*(\oM_{g,2})$.
\end{theorem}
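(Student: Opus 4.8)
\section*{Proof proposal}

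The plan is to unwind the phrase ``equal in Gorenstein quotient'' into a statement about linear functionals and then match them by a single recursion. Write $X := \Coef_{a^{2g}}\DR_g(a,-a)\lambda_g \in R^{2g}(\oM_{g,2})$. Since $\dim\oM_{g,2}=3g-1$, the complementary degree is $g-1$, so the assertion is exactly that
$\int_{\oM_{g,2}} X\,\omega = \int_{\oM_{g,2}} \sfB^g\,\omega$
for every $\omega\in R^{g-1}(\oM_{g,2})$. By the standard structure theory of the tautological ring, $R^{g-1}(\oM_{g,2})$ is spanned by decorated boundary strata, i.e.\ dual graphs carrying monomials in $\psi$- and $\kappa$-classes at the vertices, so it suffices to test the two functionals against such generators. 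Both test classes are supported on the compact-type locus: for $X$ this follows from the footnoted vanishing $\lambda_g|_{\oM_{g,2}\setminus\mathcal{M}^{ct}_{g,2}}=0$ combined with Hain's formula, and for $\sfB^g$ it is manifest from its definition as a sum over trees. Hence every generator $\omega$ supported off compact type pairs to zero with both classes, and I may restrict attention to compact-type generators.

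Next I would set up an induction on $g$ driven by the behaviour of $X$ and $\sfB^g$ under multiplication by the degree-one generators $\xi\in\{\psi_1,\psi_2,\kappa_1,\delta_{h,S}\}$, since each such multiplication lowers the complementary degree by one. When $\xi=\delta_{h,S}$ is a separating boundary divisor, restricting both classes to the corresponding stratum $\oM_{g',2}\times\oM_{g'',2}$ (gluing the node, $g'+g''=g$) rewrites them through lower-genus data: on the $\DR$ side one uses the splitting formula for $\DR_g(a,-a)$ together with the multiplicativity $\gl^*\lambda_g=\sum_{g'+g''=g}\lambda_{g'}\boxtimes\lambda_{g''}$ valid on compact type, and on the $\sfB^g$ side the defining chain decomposes into shorter chains of the same shape. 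The inductive hypothesis then equates the two functionals on each lower-genus factor, matching the boundary pairings. The complementary ``interior'' move is multiplication by a $\psi$- or $\kappa$-class: here I would use the known behaviour of $\DR_g(a,-a)$ under such multiplication (polynomiality and Hain's formula on compact type, controlled via Pixton's formula) on the one side, and the fact that the same operation simply increments a decoration $d_i$ at the relevant end vertex of the chain on the $\sfB^g$ side, and show that the resulting recursions coincide.

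The hard part will be making these two recursions agree \emph{exactly}, so that the $\DR$-side reductions reproduce the full combinatorial content of the defining sum \eqref{bumboo}: the alternating sign $(-1)^{k-1}$, the constraints $\sum_i g_i=g$ and $\sum_i d_i+k-1=2g$, and above all the one-sided inequality $d_1+\cdots+d_\ell+\ell-1\le 2(g_1+\cdots+g_\ell)-1$ for $1\le\ell\le k-1$. This support condition is precisely the dimension cutoff that forces the $\lambda_g$-twisted $\DR$ intersection numbers to vanish whenever a $\psi$-power piles up too much degree on an initial segment of the chain, so the crux is to prove that the interior reduction of $X$ vanishes exactly outside this range and otherwise matches the strata counted by $\sfB^g$. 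I expect the base case $g=1$ to be a direct check; once the two recursions and their base cases are shown to coincide, equality of the functionals on all of $R^{g-1}(\oM_{g,2})$, and hence of the two classes in the Gorenstein quotient, follows by the induction on $g$.
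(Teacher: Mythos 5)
Your skeleton (pairing against decorated strata, vanishing off compact type, splitting along separating nodes, induction on genus with base case $g=1$) matches the paper's, but you have flagged and then left entirely open the step that actually carries the proof: handling pure $\psi$--$\kappa$ decorations and making the two recursions coincide. The paper never performs the ``interior move'' you propose (multiplying $\DR_g(a,-a)\lambda_g$ by $\psi$- or $\kappa$-classes and controlling the result via Pixton's formula), and it never re-derives the combinatorics of \eqref{bumboo} --- the sign $(-1)^{k-1}$ or the inequalities $d_1+\cdots+d_\ell+\ell-1\le 2(g_1+\cdots+g_\ell)-1$ --- from the $\DR$ side. Instead it uses two inputs absent from your plan. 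First, \cite[Theorem 1.1]{BSZ16}, which gives $R^{g-1}(\mathcal{M}_{g,2})=\mathbb{Q}\langle\psi_1^{g-1},\psi_2^{g-1}\rangle$: any vertex decoration $\psi_1^a\psi_2^b\prod_i\kappa_i^{c_i}$ of complementary degree equals $\alpha\psi_1^{g-1}+\beta\psi_2^{g-1}+\bar{\omega}$ with $\bar{\omega}$ a boundary class, and the boundary part is fed back into the bamboo analysis. This rank statement is what closes the recursion; without it your induction does not terminate, since multiplication by $\kappa$-classes or mixed $\psi$-monomials does not reduce to previously treated cases, and the behaviour of $\DR_g(a,-a)\lambda_g$ under such multiplications cannot be read off from Hain's formula alone.

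Second, and more seriously, your stated crux --- proving that ``the interior reduction of $X$ vanishes exactly outside this range and otherwise matches the strata counted by $\sfB^g$'' --- amounts to re-proving the structural results of \cite{BIS21}, which is substantially harder than the theorem at hand. The paper sidesteps this entirely: \cite{BIS21} already establishes that $\sfB^g$ satisfies the same three properties as $\DR_g(a,-a)\lambda_g$, namely vanishing on the nonseparating divisor, vanishing on divisors with an unmarked component, and the splitting $\sfB^{g}\cdot\delta = \sfB^{g_1}\diamond\sfB^{g_2}$ of \eqref{eq:Thm-Inter-1-g1-g2-2} --- the latter a genuinely nontrivial combinatorial statement, not ``manifest from the definition'' as your proposal suggests. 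Given these, the two linear functionals satisfy literally the same recursion with the same base value $\tfrac{1}{24}$ at $g=1$, so equality follows formally, and the sign and the support inequalities of \eqref{bumboo} never need to be reproduced on the $\DR$ side. Two smaller points: your restriction formula should read $\gl^*\lambda_g=\lambda_{g'}\boxtimes\lambda_{g''}$ (the lower terms of the Whitney sum vanish for degree reasons, so there is no sum over $g'+g''=g$); and you never verify that non-bamboo trees --- those with a leaf vertex carrying no marked point --- pair to zero with $X$, which in the paper is exactly the content of property \eqref{eq:div} and requires the identity $\DR_{g_2}(\emptyset)\lambda_{g_2}=(-1)^{g_2}\lambda_{g_2}^2=0$.
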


And, since each semisimple cohomological field theory can be reconstructed from its topological part via Givental-Teleman R-matrix action, it can be represented as a sum over decorated by $\psi$- and $\kappa$-classes stable graphs. 
Following the paper \cite{BGR19}, denote by $A^g_{2g-1}=\pi_*\Coef_{a^{2g}}\DR_{g}(a,-a)\lambda_g$ and $ B^g_{2g-1}=\pi_*\sfB^g \in R^{2g-1}(\oM_{g,1})$, where $\pi: \oM_{g,2}\rightarrow \oM_{g,1}$ is a forgetting morphism. We have the following 
\begin{corollary}
For each semisimple cohomological field theory $c_{g,n}$
\begin{align}
\label{cor}
\int_{\oM_{g,1}} A^g_{2g-1}c_{g,1}=\int_{\oM_{g,1}} B^g_{2g-1}c_{g,1}.
\end{align}
\end{corollary}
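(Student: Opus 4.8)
The plan is to deduce the corollary directly from Theorem~\ref{th} via the projection formula, the one substantive input being that semisimplicity forces $c_{g,1}$ to be tautological. First I would recall, as indicated in the paragraph preceding the statement, that by the Givental--Teleman reconstruction theorem a semisimple cohomological field theory is obtained from its topological part through the $R$-matrix action; consequently each class $c_{g,1}$ lies in the tautological ring $R^*(\oM_{g,1})$, being a sum over stable graphs whose vertices carry $\psi$- and $\kappa$-decorations. This is the sole place where the semisimplicity hypothesis enters, and it is exactly what allows $c_{g,1}$ to be tested against the notion of equality in the Gorenstein quotient.

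Next I would move the computation up to $\oM_{g,2}$ along the forgetful morphism $\pi\colon\oM_{g,2}\to\oM_{g,1}$. Writing $A^g:=\Coef_{a^{2g}}\DR_g(a,-a)\lambda_g\in R^{2g}(\oM_{g,2})$ and $B^g:=\sfB^g\in R^{2g}(\oM_{g,2})$, so that $A^g_{2g-1}=\pi_*A^g$ and $B^g_{2g-1}=\pi_*B^g$, the projection formula gives
\[
\int_{\oM_{g,1}} A^g_{2g-1}\,c_{g,1}=\int_{\oM_{g,1}}\pi_*(A^g)\cdot c_{g,1}=\int_{\oM_{g,2}} A^g\cdot \pi^* c_{g,1},
\]
and identically with $B^g$ in place of $A^g$. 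The pullback $\pi^* c_{g,1}$ is again tautological in $R^*(\oM_{g,2})$. A dimension count then pins down the relevant piece: since $\dim\oM_{g,2}=3g-1$ while $A^g,B^g\in R^{2g}(\oM_{g,2})$, only the codimension $g-1$ component of $\pi^* c_{g,1}$ survives the pairing, and that component is a tautological class of degree complementary to both $A^g$ and $B^g$.

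Finally I would invoke Theorem~\ref{th}. Equality of $A^g$ and $B^g$ in the Gorenstein quotient of $R^*(\oM_{g,2})$ means precisely that $\int_{\oM_{g,2}} A^g\,\omega=\int_{\oM_{g,2}} B^g\,\omega$ for every tautological $\omega$ of complementary degree. Taking $\omega=\pi^* c_{g,1}$ yields
\[
\int_{\oM_{g,2}} A^g\cdot\pi^* c_{g,1}=\int_{\oM_{g,2}} B^g\cdot\pi^* c_{g,1},
\]
and running the projection formula backwards returns the desired identity \eqref{cor}. The only genuine subtlety, and the step I would treat most carefully, is the appeal to Givental--Teleman: the whole argument collapses for a non-semisimple theory, because then $c_{g,1}$ need not be tautological and equality in the Gorenstein quotient says nothing about pairings against non-tautological classes.
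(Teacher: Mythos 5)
Your proof is correct and takes essentially the same route as the paper, whose entire (implicit) argument for the corollary is the remark that Givental--Teleman reconstruction makes $c_{g,1}$ tautological, so that Theorem~\ref{th} applies via the projection formula along $\pi\colon\oM_{g,2}\to\oM_{g,1}$. Your explicit dimension count isolating the degree $g-1$ component of $\pi^*c_{g,1}$ merely spells out details the paper leaves implicit.
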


\begin{remark}
As we mentioned in the introduction, the statement in corollary means precisely that one-point parts of potential of double ramification hierarchy and reduced potential of Dubrovin-Zhang hierarchy are equal.
\end{remark}

A conjectural formula for $\lambda_g\mathrm{DR}_g(a,-a)$ is true in Gorenstein quotient

\begin{proof}[Proof of Theorem ~\ref{th}]

We are interested in integrals of the form 
\begin{align}
\label{int}
\Coef_{a^{2g}}\int_{\oM_{g,2}}\DR_{g}(a,-a)\lambda_g  \omega, \quad \int_{\oM_{g,2}}\sfB^g \omega,
\end{align}
when $\omega \in  R^{g-1}(\oM_{g,2})$. Our strategy will be the following. Firstly, we show that if $\omega$ is proportional to a class whose generic point is a curve with dual graph containing a cycle($\omega$ contains a cycle, for short), then its intersection numbers with $\DR_{g}(a,-a)\lambda_g$ and $\sfB^g$ cycles are zero. Then we note that the only trees occurring in $\omega$, that contribute nontrivially, are just bamboos like in formula (\ref{bumboo}) for $\sfB^g$. Then we compute both integrals by induction on genus and show that they are equal to the same expressions. 

Let us list the properties of both classes to be used. Denote by 
$$
\gl_1\colon\oM_{g_1,2}\times\oM_{g_2,2}\to\oM_{g_1+g_2, 2}
$$
the gluing map that corresponds to gluing a curve from~$\oM_{g_1,2}$ to a curve from~$\oM_{g_2,2}$ along the second marked point on the first curve and the second  marked point on the second curve. Let $a\in\mathbb{Z}$ then introduce the notation
\begin{align*}
&\DR_{g_1}(a,-a)\boxtimes_1\DR_{g_2}(a,-a):=\\
=&\gl_{1*}\left(\DR_{g_1}(a,-a)\times\DR_{g_2}(a,-a)\right)\in R^{g_1+g_2+1}(\oM_{g_1+g_2,2}).
\end{align*}
We have the
\begin{lemma}
\label{lem}
The $\DR_{g}(a,-a)\lambda_g$ satisfies the properties
\begin{align}
 \label{eq:irrdiv}
	 & \DR_{g}(a,-a)\lambda_g \cdot \vcenter{
	 	\xymatrix@M=2pt@C=10pt@R=0pt{
	 		& &  \\ 
	 		&*+[o][F-]{{g\text{-}1}}\ar@{-}[lu]*{{\ }_1\ }\ar@{-}[ld]*{{\ }_2\ }\ar@{-}@(ur,dr)\ar@{-}@(dr,ur) & \\ & &
	 	}
 	}= 0; 
\\ \label{eq:div}
 	 & \DR_{g}(a,-a)\lambda_g \cdot \vcenter{
 	 	\xymatrix@C=10pt@R=0pt{
 	 		& & \\ 
 	 		&*+[o][F-]{{g_1}}\ar@{-}[lu]*{{\ }_1\ }\ar@{-}[ld]*{{\ }_2\ }\ar@{-}[r] &*+[o][F-]{{g_2}}\\ & & 
 	 	}
 	 }  = 0, & & g_1+g_2 = g, \ g_2\geq 1; 
\\ \label{eq:split}
& \DR_{g}(a,-a)\lambda_g \cdot \vcenter{
\xymatrix@C=10pt@R=10pt{
	&*+[o][F-]{{g_1}}\ar@{-}[l]*{{\ }_1\ }\ar@{-}[r]& *+[o][F-]{{g_2}} \ar@{-}[r]*{{\ }_2}& 
	}
} = \DR_{g_1}(a,-a)\lambda_{g_1} \boxtimes_1 \DR_{g_2}(a,-a)\lambda_{g_2},
& & g_1+g_2 = g, \ g_1,g_2\geq 1,
\end{align}

\end{lemma}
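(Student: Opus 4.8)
The plan is to pull back each boundary class along its gluing morphism $\xi$ and to play off two features of $\lambda_g$ against one another: the splitting of the Hodge bundle $\mathbb{E}$ at a node, and the vanishing of $\lambda_g$ away from the compact-type locus. The basic mechanism is the projection formula $\DR_g(a,-a)\lambda_g\cdot\xi_*(1)=\xi_*\xi^*\bigl(\DR_g(a,-a)\lambda_g\bigr)$, so in each case it suffices to understand the pullbacks of $\lambda_g$ and of $\DR_g(a,-a)$ separately.

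First I would record the behaviour of the Hodge bundle. For the self-loop stratum~\eqref{eq:irrdiv} the node is non-separating, so $\xi^*\mathbb{E}_g$ has a trivial quotient line bundle with kernel $\mathbb{E}_{g-1}$; hence $\xi^*\lambda_g=c_g(\xi^*\mathbb{E}_g)=c_g(\mathbb{E}_{g-1})=0$ because $\mathbb{E}_{g-1}$ has rank $g-1$. The projection formula then gives~\eqref{eq:irrdiv} at once, and the same computation, applied to any stratum carrying a non-separating node, proves the vanishing of $\lambda_g$ off the compact-type locus used in the footnote. For the separating strata~\eqref{eq:div} and~\eqref{eq:split} the node is separating, so $\xi^*\mathbb{E}_g$ is the external direct sum of the Hodge bundles of the two factors; a rank count leaves only the top term, giving $\xi^*\lambda_g=\lambda_{g_1}\times\lambda_{g_2}$.

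Next I would pull back $\DR_g(a,-a)$. The weight transmitted through a separating node is fixed by the balancing condition at each vertex: in~\eqref{eq:div} both markings lie on the genus-$g_1$ vertex, so the edge carries weight $0$ and the genus-$g_2$ factor sees $\DR_{g_2}(0)$, while in~\eqref{eq:split} the edge carries weight $\mp a$, so the factors see $\DR_{g_1}(a,-a)$ and $\DR_{g_2}(-a,a)=\DR_{g_2}(a,-a)$. On the compact-type locus of each factor---where $\lambda_{g_i}$ is supported---Hain's formula writes $\DR$ as $\tfrac1{g_i!}\eta^{g_i}$ for a theta-type divisor $\eta$, and the nilpotency $\eta^{\,g_i+1}=0$, coming from the $g_i$-dimensionality of the Jacobian fibres, collapses the K\"unneth expansion of $\xi^*\DR_g(a,-a)$ to the single product term with these transmitted weights.

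Combining the two inputs finishes both remaining identities. In~\eqref{eq:div} the genus-$g_2$ factor produces $\DR_{g_2}(0)\lambda_{g_2}$; since Hain's formula is homogeneous of positive degree in the weights, $\DR_{g_2}(0)$ restricts to $0$ on $\mathcal{M}^{ct}_{g_2,1}$, and as $\lambda_{g_2}$ vanishes off compact type the product $\DR_{g_2}(0)\lambda_{g_2}$ is zero, whence~\eqref{eq:div}. In~\eqref{eq:split} one obtains $\xi^*\bigl(\DR_g(a,-a)\lambda_g\bigr)=\bigl(\DR_{g_1}(a,-a)\lambda_{g_1}\bigr)\times\bigl(\DR_{g_2}(a,-a)\lambda_{g_2}\bigr)$, and pushing forward along $\gl_1$ gives exactly the claimed $\boxtimes_1$ expression. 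I expect the main obstacle to be making the third paragraph rigorous: Hain's formula only governs $\DR_g(a,-a)$ over the open compact-type locus, whereas $\DR_g(a,-a)\lambda_g$ is a class on all of $\oM_{g,2}$ whose naive boundary restriction is a sum with $\psi$-class corrections supported over the complement. The delicate point is that the difference between $\xi^*\DR_g(a,-a)$ and the single product term is supported on the non-compact-type loci of the two factors, and one must check that multiplying by $\lambda_{g_1}\times\lambda_{g_2}$---which vanishes precisely there---annihilates this difference.
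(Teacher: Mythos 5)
Your handling of \eqref{eq:irrdiv} coincides with the paper's: the paper also kills $\lambda_g$ off the compact-type locus via the residue epimorphism $\mathbb{E}\to\mathbb{C}$ and the Whitney sum formula, exactly your exact-sequence argument. For \eqref{eq:div} and \eqref{eq:split}, however, you take a genuinely different route: the paper simply cites the splitting property of $\DR_g(a,-a)\lambda_g$ from \cite{BSSZ15}, and for \eqref{eq:div} finishes with $\DR_{g_2}(\emptyset)\lambda_{g_2}=(-1)^{g_2}\lambda_{g_2}^2=0$, using the obstruction-bundle identification $\DR_{g}(\emptyset)=(-1)^{g}\lambda_{g}$ and Mumford's relation $\lambda_g^2=0$ from Grothendieck--Riemann--Roch, whereas you re-derive the splitting from Hain's formula. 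Your homogeneity argument for $\DR_{g_2}(0)$ in \eqref{eq:div} is an acceptable substitute for $\lambda_{g_2}^2=0$. What the paper's citation buys is precisely the two points where your derivation is soft: the behaviour of the restriction away from compact type, and the collapse of the K\"unneth expansion.

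Of these two points, one is a genuine gap: your justification of the nilpotency $\eta^{g_i+1}=0$ as ``coming from the $g_i$-dimensionality of the Jacobian fibres'' is incorrect as stated. The class $\eta$ is the pullback of the theta divisor along a \emph{section} $\mathcal{M}^{ct}_{g_i,2}\to\mathcal{J}$, so it lives on a base of dimension $3g_i-1\gg g_i$; fibre dimension gives no nilpotency for such a pullback. The vanishing of $\theta^{g+1}$ on the compact-type locus is a nontrivial theorem (Clader--Grushevsky--Janda--Zakharov; alternatively one may invoke the Clader--Janda vanishing of Pixton's class in degrees $>g$), and without some such input your K\"unneth collapse does not follow --- this is one reason the paper leans on \cite{BSSZ15} rather than redoing the computation. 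The second point you flag honestly but leave open; it is fillable by the standard excision argument: a class vanishing on the open compact-type locus of each factor is a pushforward from the closed complement, every component of which carries a non-separating node, so $\lambda_{g_1}\times\lambda_{g_2}$ pulls back to zero there (rationally --- the residue line on such a stratum is only defined up to the choice of branch, so its first Chern class is $2$-torsion) and annihilates the difference by the projection formula. With the theta-power vanishing supplied by a correct citation and this excision step written out, your argument becomes a complete, self-contained proof of the splitting property; as written, it is incomplete exactly where the paper is complete by citation.
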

\begin{proof}[Proof of Lemma ~\ref{lem}]
 Let $\mathcal{M}^{ct}_{g,2}\subset \oM_{g,2}$ be moduli space of curves with compact Jacobian. This means in particular that the dual graph of the stratum of such curves contains a cycle.
Notice that there is surjective morphism $\mathbb{E}\to \mathbb{C}\to 0$ of bundles over $\oM_{g,2}\backslash \mathcal{M}^{ct}_{g,2}$
given on fibers by computing residue at the nodal point  $\omega \to \res_x \omega$. Then from Whitney sum formula the top Chern class restricts by zero on curves with noncompact Jacobian $\lambda_g|_{\oM_{g,2}\backslash \mathcal{M}^{ct}_{g,2}} = 0$ and the first property follows. 

At the same time by splitting property of $\DR$-cycle, proved in ~\cite{BSSZ15}, we have 
\begin{align*}
\DR_{{g}}(a,-a)\lambda_g \vcenter{
 	 	\xymatrix@C=10pt@R=0pt{
 	 		& & \\ 
 	 		&*+[o][F-]{{g_1}}\ar@{-}[lu]*{{\ }_1\ }\ar@{-}[ld]*{{\ }_2\ }\ar@{-}[r] &*+[o][F-]{{g_2}}\\ & & 
 	 	}
 	 }  = \DR_{g_1}\left(a,-a,0\right)\lambda_{g_1}  \boxtimes_1\DR_{g_2}\left(\emptyset\right) \lambda_{g_2} =0,
\end{align*}
since $\DR_{g_2}\left(\emptyset\right) \lambda_{g_2}=(-1)^{g_2} \lambda^{2}_{g_2}=0$. 

Here the fact that $\DR_{g}\left(\emptyset\right)=(-1)^{g} \lambda_{g}$ follows from the form of an obstruction bundle for degree zero maps to a point. Namely, for the empty ramification data the virtual fundumental class of stable rubber maps can be easily computed: $\DR_{g}\left(\emptyset\right) = c_{top}(\mathbb{E}^{*})\cap[\oM_{g,n}]$. And the fact that $\lambda^{2}_{g}=0$ follows from the Grothendieck–Riemann–Roch theorem. Finally, the third property is again the splitting property of $\DR$-cycle.
\end{proof}

The similar properties of class $ \sfB^g$ were proved to hold in \cite{BIS21}.
\begin{align}
 \label{eq:Thm-irrdiv}
	 & \sfB^g \cdot \vcenter{
	 	\xymatrix@M=2pt@C=10pt@R=0pt{
	 		& &  \\ 
	 		&*+[o][F-]{{g\text{-}1}}\ar@{-}[lu]*{{\ }_1\ }\ar@{-}[ld]*{{\ }_2\ }\ar@{-}@(ur,dr)\ar@{-}@(dr,ur) & \\ & &
	 	}
 	}= 0; 
\\ \label{eq:Thm-20div}
 	 & \sfB^g \cdot \vcenter{
 	 	\xymatrix@C=10pt@R=0pt{
 	 		& & \\ 
 	 		&*+[o][F-]{{g_1}}\ar@{-}[lu]*{{\ }_1\ }\ar@{-}[ld]*{{\ }_2\ }\ar@{-}[r] &*+[o][F-]{{g_2}}\\ & & 
 	 	}
 	 }  = 0, & & g_1+g_2 = g, \ g_2\geq 1; 
\\ \label{eq:Thm-Inter-1-g1-g2-2}
& \sfB^g \cdot \vcenter{
\xymatrix@C=10pt@R=10pt{
	&*+[o][F-]{{g_1}}\ar@{-}[l]*{{\ }_1\ }\ar@{-}[r]& *+[o][F-]{{g_2}} \ar@{-}[r]*{{\ }_2}& 
	}
} = \sfB^{g_1}\diamond \sfB^{g_2},
& & g_1+g_2 = g, \ g_1,g_2\geq 1 
\end{align}

Now suppose that the tautological class $\omega$  contains a cycle. Then from
(\ref{eq:irrdiv}),(\ref{eq:Thm-irrdiv}) it is clear that the intersection of both classes with $\omega$ is zero. Then observe that if $\omega$  is proportional to the class whose generic point is a curve with dual graph containing a vertex with only one half-edge incident to it then this $\omega$ lies in a stratum with dual graph as in property (\ref{eq:Thm-20div}) and hence again $ \sfB^g \omega=0$. At the same time by the splitting property (\ref{eq:div}) of 
$\DR$-cycle $\DR_{{g}}(a,-a)\lambda_g\omega=0$.

Then we show that the assertion of the theorem ~\ref{th} is true for decorated bumboo

\begin{align*}
\omega = \vcenter{\xymatrix@C=10pt{
		&*+[o][F-]{g_1}\ar@{-}[l]*{{\ }_1\ }\ar@{-}[rr]^<<<<{\omega_1} &&*+[o][F-]{g_2}\ar@{-}[rr]^<<<<{\omega_2}&& *+[o][F-]{g_3}\ar@{--}[rrr]^<<<<{\omega_3} && & *+[o][F-]{g_k}\ar@{-}[r]*{{\ }_2}^<<<<{\omega_k} &
}}
\quad 
g_1+\dots+g_k = g, %\quad \sum a_i + \sum b_i = g - k,
\end{align*}
where each vertex carries a class $\omega_i = \psi_1^{d_1}\psi_2^{d_2}\prod_i\kappa_i^{a_i}$ with $d_1+d_2+\sum_i ia_i = g-1$.
Then we proceed by induction on genus. Suppose we know that the theorem~\ref{th} is true for genera up to $g-1$.\\
We compute

\begin{align*}
&\Coef_{a^{2g}}\int_{\oM_{g,2}}\DR_{g}(a,-a)\lambda_g  \omega\\
&=  \Coef_{a^{2g}} \int_{\oM_{g,2}}  \DR_{{g_1}}(a,-a)\lambda_{g_1}\psi_1^{d_1} \psi_{e^{\prime}_1}^{a_1}\prod_i\kappa_i^{c_i}  \boxtimes_1 
\DR_{{g_2}}(a,-a)\lambda_{g_2} \psi_{e^{\prime\prime}_1}^{b_1}\psi_{e^{\prime}_2}^{a_2}\prod_i\kappa_i^{c_i}\dots\\
&\dots  \boxtimes_1 \DR_{{g_{k}}}(a,-a)\lambda_{g_{k}} \psi_{e^{\prime\prime}_{k-1}}^{b_{k-1}}\psi_2^{d_2}\prod_i\kappa_i^{c_i}\\
&=  \Coef_{a^{2g_1}} \int_{\oM_{g_1,2}}\DR_{{g_1}}(a,-a)\lambda_{g_1}\psi_1^{d_1}\psi_{e^{\prime}_1}^{a_1}\prod_i\kappa_i^{c_i}
 \Coef_{a^{2g_2}} \int_{\oM_{g_2,2}} \DR_{{g_2}}(a,-a)\lambda_{g_2} \psi_{e^{\prime\prime}_1}^{b_1}\psi_{e^{\prime}_2}^{a_2}\prod_i\kappa_i^{c_i}\dots\\
 &\dots\Coef_{a^{2g_{k}}} \int_{\oM_{g_{k},2}}
 \DR_{{g_{k}}}(a,-a)\lambda_{g_{k}} \psi_{e^{\prime\prime}_{k-1}}^{b_{k-1}}\psi_2^{d_2}\prod_i\kappa_i^{c_i},
\end{align*}
 where we used the splitting property (\ref{eq:split}) of $\DR$-cycle.

Each tautological class is by definition represented by elements in a strata algebra which is a graded by codimention $\mathbb{Q}$-algebra with a basis given by isomorphism classes of pairs $[\Gamma,\gamma]$ where $\Gamma$ is a dual graph of the corresponding stratum of the moduli space and $\gamma$ is a product of $\psi$- and
$\kappa$-classes sitting on the vertices and half-edges of $\Gamma$, see (\cite{P16}) for survey.

Observe that an element $\psi_1^a\psi_2^b\prod_i\kappa_i^{c_i}$ can be written in the form 
$$
\psi_1^a\psi_2^b\prod_i\kappa_i^{c_i} = \omega_0+\bar{\omega}
$$
where $\omega_0$ is represented by a class $[1,\gamma]$ ($1$ is graph without edges) and $\bar{\omega}$ is represented by a boundary classes $[\Gamma,\gamma^{\prime}]$ where $\Gamma$ necessarily have edges.

By \cite[Theorem 1.1]{BSZ16}
\begin{align*}
R^{g-1}(\mathcal{M}_{g,2}) = \mathbb{Q}\langle \psi_1^{g-1},\psi_2^{g-1}\rangle. 
\end{align*}
Using this result it is immediate that 
\begin{align}
\label{arg}
\psi_1^a\psi_2^b\prod_i\kappa_i^{c_i} = \alpha \psi_1^{g-1}+\beta \psi_2^{g-1}+\bar{\omega},
\end{align}
where $\alpha,\beta \in  \mathbb{Q}$. Note that $\bar{\omega}$ being a boundary class means that its dual graph contains strictly bigger than one vertice.  As we discussed above, $\bar{\omega}$ contributes nontrivially only if it is of the form (\ref{bumboo}).
Hence we indeed have a recursion on genus.

A conjectural formula for $\lambda_g\DR_g(a,-a)$ is true in Gorenstein quotient

From the first two properties  (\ref{eq:Thm-irrdiv}), (\ref{eq:Thm-20div})  it is clear that again this integral 
\begin{align*}
\int_{\oM_{g,2}}\sfB^g\omega, \quad \omega \in R^{g-1}(\oM_{g,2}).
\end{align*}
is zero unless $\omega$ is of the form (\ref{bumboo}).
Now  using properties (\ref{eq:Thm-irrdiv}), (\ref{eq:Thm-20div}), (\ref{eq:Thm-Inter-1-g1-g2-2}) and repeating the argument (\ref{arg}) we get the same recursion on genus.

Recursion stops at genus 1 stage with equal outputs. Namely,
\begin{align*}
\int_{\oM_{1,2}} \sfB^1 = \int_{\oM_{1,2}}\psi_2^2 =  \dfrac{1}{24}
\end{align*}
and using Hain's formula for the restriction of $\DR$-cycle on compact type space $\mathcal{M}_{1,2}^{ct}$
\begin{align*}
 &\Coef_{a^{2}} \int_{\oM_{1,2}} \DR_1(a,-a)\lambda_1 =  \Coef_{a^{2}} \int_{\oM_{1,2}} \dfrac{a^2}{2}(\psi_1^{\prime}+\psi_2^{\prime}+2\delta_0^{12})\lambda_1\\
&=\int_{\delta_0^{12}}\lambda_1 = \dfrac{1}{24},
\end{align*}
where $\psi_i^{\prime}$ mean pullback of $\psi_i$ to  $\mathcal{M}_{1,2}^{ct}$ and $\delta_0^{12}$ is a class of divisor whose generic point is represented by a curve with nonseparating node.

 From the same recursion for $\DR_{g}(a,-a)\lambda_g$ and $\sfB^g$ sides the theorem \ref{th} follows.

\end{proof}

\section{Acknowledgements}
I am thankful to A.Buryak both for suggesting this problem and for valuable discussions. The author is partially supported by International Laboratory of Cluster Geometry NRU HSE, RF Government grant, ag. № 075-15-2021-608 dated 08.06.2021

\newpage

\end{document}